\title{Planar digraphs of digirth four are 2-colourable}
\author{Zhentao Li\\
  {D\'epartement d'Informatique, UMR CNRS 8548}\\
  {\'Ecole Normale Sup\'erieure}\\
  {75005 Paris}\\
  email: {\tt zhentao.li@ens.fr}
\and
Bojan Mohar\thanks{Supported in part by an NSERC Discovery Grant (Canada),
  by the Canada Research Chair program, and by the
  Research Grant P1--0297 of ARRS (Slovenia).}~\thanks{On leave from:
  IMFM, Ljubljana, Slovenia. The results of this paper were obtained while the author visited \'Ecole Normale Sup\'erieure in Paris. The hospitality of the hosting university is greatly acknowledged.}\\
  {Department of Mathematics}\\
  {Simon Fraser University}\\
  {Burnaby, B.C. V5A 1S6} \\
  email: {\tt mohar@sfu.ca}
}
\newtheorem{theorem}{Theorem}[section]
\newtheorem{lemma}[theorem]{Lemma}
\newtheorem{corollary}[theorem]{Corollary}
\newtheorem{conjecture}[theorem]{Conjecture}
\newcommand{\DEF}[1]{{\emph{#1\/}}}
\begin{document}

\maketitle

\begin{abstract}
Neumann-Lara conjectured in 1985 that every
planar digraph with digirth at least three is 2-colourable, meaning
that the vertices can be 2-coloured without creating any monochromatic directed
cycles. We prove a relaxed version of this conjecture: every planar digraph
of digirth at least four is 2-colourable.
\end{abstract}

{\bf Keywords:} Planar digraph, digraph chromatic number.

\section{Introduction}

Let $D$ be an oriented graph (i.e., a digraph without cycles of length at most $2$). A function $f: V(D) \to \{1,\dots,k \}$ is a \DEF{$k$-colouring} of $D$ if the subdigraph induced by vertices of colour $i$ is acyclic for all $i$.
We say that $D$ is \DEF{$k$-colourable} if it admits a $k$-colouring.

The following conjecture was proposed by Neumann-Lara \cite{N} (and independently by \v{S}krekovski, see~\cite{BFJKM2004}).

\begin{conjecture}
\label{conj:planar}
Every oriented planar graph is $2$-colourable.
\end{conjecture}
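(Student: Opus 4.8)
The plan is to argue by contradiction through a vertex-minimal counterexample together with a discharging argument on a plane embedding. Suppose the statement fails and let $D$ be an oriented planar graph on the fewest vertices that admits no $2$-colouring; fix a plane embedding. We may assume $D$ is connected. The first reductions control degrees. If a vertex $v$ has in-degree at most $1$, then $D-v$ has a $2$-colouring by minimality, and we extend it by colouring $v$ differently from its unique in-neighbour (arbitrarily if $v$ has none); every directed cycle through $v$ enters along that single in-arc, so no monochromatic directed cycle through $v$ is created, contradicting minimality. The symmetric statement holds for out-degree. Hence in $D$ every vertex has in-degree at least $2$ and out-degree at least $2$, so the underlying simple graph $G$ has minimum degree at least $4$.

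Because $G$ is simple and planar we have $|E(G)|\le 3|V(G)|-6$, while the degree bound gives $|E(G)|\ge 2|V(G)|$; these are compatible, so no global count alone suffices and a local analysis is needed. I would next show that $D$ is $2$-connected: a cut vertex $v$ splits $D$ into parts sharing only $v$, every directed cycle lies inside one part, and $2$-colourings of the parts can be permuted to agree on $v$ and then pasted together, contradicting minimality. A similar but more delicate argument handles $2$-cuts. I would then hunt for reducible configurations around degree-$4$ vertices and around vertices incident with several triangular faces: local patterns for which any $2$-colouring of the remainder of $D$ is guaranteed to extend. The guiding principle is that a low-degree vertex is forced into a monochromatic directed cycle only when its in- and out-neighbourhoods are joined by short monochromatic directed paths, and this can usually be avoided by choosing its colour last.

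For the global contradiction I would use Euler's formula in normalised form. Give each vertex $v$ the charge $\deg(v)-4$ and each face $f$ the charge $|f|-4$, where $|f|$ is its length. Summing and using $|V|-|E|+|F|=2$ yields total charge $-8$. The minimum-degree bound makes every vertex charge nonnegative, so the entire deficit is carried by the faces of length $3$, each contributing $-1$. Discharging rules would then send surplus from vertices of degree at least $5$ and from faces of length at least $5$ toward the triangular faces, and the reducibility results of the previous step would ensure that enough surplus lies nearby. If every triangular face could be raised to nonnegative charge the total would become nonnegative, contradicting $-8$.

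The main obstacle is exactly the triangular faces, and in particular the directed triangles. Under the bare digirth-three hypothesis a triangular face may be a directed $3$-cycle, and a cluster of three mutually adjacent degree-$4$ vertices forming such a cycle defeats every local reduction I can devise: their in- and out-neighbourhoods interlock so tightly that no single vertex can be freed without threatening a monochromatic triangle elsewhere. Controlling, or discharging around, these directed-triangle configurations is the crux of the whole argument. It is precisely here that strengthening the hypothesis to digirth at least four removes the offending faces and lets the discharging close, which explains why that weaker statement is attainable while the directed-triangle case keeps the full digirth-three conjecture as the decisive difficulty for this method.
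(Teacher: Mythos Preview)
The statement you are attempting is Conjecture~\ref{conj:planar}, which the paper does \emph{not} prove; it is presented as an open problem of Neumann-Lara, and the paper establishes only the digirth-four relaxation (Theorem~\ref{thm:main} and Corollary~\ref{cor:digirth4}). So there is no paper proof to compare against, and your proposal should be read as an attack on an open problem.

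As a proof, your proposal has a genuine gap that you yourself identify. The degree reductions and the Euler-formula charge count are correct, and the cut-vertex reduction is fine, but from that point on nothing is actually carried out: no reducible configurations are exhibited, no discharging rules are stated, and no verification is given that final charges are nonnegative. You explicitly concede that directed facial triangles with degree-$4$ vertices resist every reduction you can devise, so the argument does not close for the conjecture as stated. Your final remark, that forbidding directed triangles (digirth at least four) ``lets the discharging close'', is also only asserted, not demonstrated; and it is worth noting that the paper takes the opposite view. The authors report that discharging was used in \cite{HM16} to settle the digirth-five case and say it is ``unlikely that the same method can be pushed further when directed 4-cycles are allowed''. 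Their own proof of the digirth-four case abandons discharging entirely and instead uses Tutte paths in the planar dual to produce a cycle that separates the vertex set into two acyclic parts. So even for the weaker digirth-four statement your proposed method is, according to the paper, the wrong tool, and for the full conjecture it remains an outline with its central step missing.
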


There seems to be a lack of methods to attack Conjecture \ref{conj:planar}, and only sporadic partial results are known.

The \DEF{digirth} of a digraph is the length of its shortest directed
cycle. Harutyunyan and one of the authors \cite{HM16} proved Conjecture \ref{conj:planar} under additional assumption that the digirth of $D$ is at least five. Their proof is based on elaborate use of the (nowadays standard) discharging
technique. However, it is unlikely that the same method can be pushed further when directed 4-cycles are allowed.

The main result of this note is the following theorem whose proof is based on a novel technique, at least when colourings of graphs are concerned.

\begin{theorem}
\label{thm:main}
Every oriented planar graph with a vertex $v_0$ such that each directed cycle of length $3$ uses $v_0$ is $2$-colourable.
\end{theorem}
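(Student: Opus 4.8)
The plan is to prove this (apparently stronger) statement by induction on $|V(D)|$, with the displayed theorem itself as the induction hypothesis; carrying the distinguished vertex $v_0$ through the induction is precisely the point, since it survives the reductions below. So suppose $D$ is a counterexample with $|V(D)|$ minimum, with distinguished vertex $v_0$. First I would dispose of the easy structural reductions. If $D$ is disconnected, has a cut vertex, or is not strongly connected, then every monochromatic directed cycle is confined to a piece (a block, or a strong component) strictly smaller than $D$; each such piece is planar, oriented, and still has all of its directed triangles through a single vertex, so colouring the pieces by induction and swapping colours to make them agree on the overlaps contradicts minimality. Hence $D$ is $2$-connected and strongly connected. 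Next, if some vertex $u$ satisfies $\deg^+(u)\le 1$ or $\deg^-(u)\le 1$, delete it, $2$-colour $D-u$ by induction (it still satisfies the hypothesis), and give $u$ the colour opposite to that of its unique out- or in-neighbour; then no monochromatic arc leaves (resp.\ enters) $u$, so $u$ lies on no monochromatic directed cycle. Hence $\delta^+(D)\ge 2$ and $\delta^-(D)\ge 2$, and since $D$ is an oriented graph this forces minimum degree at least $4$.

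The heart of the argument is a contraction anchored at $v_0$. Let $e$ be an arc incident with $v_0$ that lies on no directed triangle of $D$. Then $D/e$ is again a simple oriented planar graph (the only way contracting $e=v_0y$ could create a digon is a directed triangle through $e$, excluded by assumption), and $D/e$ still satisfies the hypothesis with the contracted vertex $z$ as the new distinguished vertex: a directed triangle of $D/e$ avoiding $z$ would be a directed triangle of $D$ avoiding $v_0$, of which there are none. By induction $D/e$ has a valid $2$-colouring, and I claim it lifts to $D$ by giving both $v_0$ and $y$ the colour of $z$. Indeed, a monochromatic directed cycle $C$ of the lifted colouring either avoids $\{v_0,y\}$ (and is then already monochromatic in $D/e$), or meets exactly one of them (and maps to a monochromatic directed cycle in $D/e$), or meets both — and in the last case, using that $C$ has length at least $4$ because $e$ is on no triangle, $C$ projects to a closed monochromatic walk through $z$ that splits at $z$ into two closed walks of length at least $3$, each containing a monochromatic directed cycle of $D/e$; every case is a contradiction. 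Thus minimality forces that \emph{every} arc incident with $v_0$ lies on a directed triangle.

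It remains to refute this residual configuration, and this is where I expect the real work. The constraint "every arc at $v_0$ lies on a directed triangle" is rigid: writing $A=N^-(v_0)$ and $B=N^+(v_0)$, every $b\in B$ must send an arc to some vertex of $A$ and every $a\in A$ must receive an arc from some vertex of $B$, and by planarity all such arcs inside $N(v_0)$ are pairwise non-crossing chords of the cyclic order in which $A\cup B$ appears around $v_0$ (the link of $v_0$ is outerplanar). I would exploit this non-crossing picture together with $\delta^+,\delta^-\ge 2$ and $2$-connectivity, either to produce a still-legal contraction at $v_0$ of a slightly different shape — for instance along two neighbours of $v_0$ that are consecutive in the rotation, routed through the face between their arcs to $v_0$, which keeps the graph planar — or, when even that is blocked, to $2$-colour a bounded core around $v_0$ explicitly and quotient it away. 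The main obstacle is exactly this: to show, using nothing but planarity and the degree bounds, that the rigid local arrangement of the triangle-covering arcs between $B$ and $A$ cannot persist in a minimal counterexample. It is here — in a careful case analysis localized at the single vertex $v_0$, rather than in a global discharging argument — that the somewhat novel part of the proof lies.
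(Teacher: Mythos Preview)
Your approach bears no resemblance to the paper's. The paper triangulates the underlying graph, passes to the planar dual, and takes a Tutte path with respect to the cycle $C^*$ of faces incident with $v_0$; together with one more edge this gives a dual cycle $R$ whose inside/outside partition of $V(D)$ \emph{is} the $2$-colouring. Property~(ii) of Tutte paths forces every face on $C^*$ to straddle $R$, so every triangle through $v_0$ is bichromatic, while in the $4$-connected case every other monochromatic cycle must be a facial triangle avoiding $v_0$, hence not directed by hypothesis. No contractions, no minimum counterexample, no local analysis at $v_0$.

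Your argument, by contrast, has an error and a gap. The error is in the contraction step: you assert that contracting an arc $e=v_0\to y$ lying on no directed triangle cannot create a digon, but if $w$ is a common neighbour with arcs $v_0\to w$ and $w\to y$ then the triangle $v_0wy$ is \emph{not} directed (two arcs leave $v_0$), yet the contraction produces the digon $z\to w\to z$. So $D/e$ need not be an oriented graph and the induction hypothesis does not apply as stated; this also undercuts the ``meets both'' case of your lifting, where one of the two closed walks at $z$ can have length exactly~$2$. The gap is the one you explicitly concede: after all reductions you arrive at the configuration in which every arc at $v_0$ lies on a directed triangle, and you offer only a heuristic sketch---a modified contraction along rotationally consecutive neighbours, or colouring a ``bounded core'' and quotienting---with no argument that either route can be completed. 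This residual case carries essentially all the content of the theorem, since nothing prevents a minimum counterexample from having many directed triangles through $v_0$; your local/contraction framework supplies no substitute for the global separation that the Tutte path delivers in the paper.
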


\begin{corollary}
\label{cor:digirth4}
Every planar digraph with digirth at least\/ $4$ is $2$-colourable.
\end{corollary}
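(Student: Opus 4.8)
The plan is to establish Theorem~\ref{thm:main}; Corollary~\ref{cor:digirth4} then follows at once, since a planar digraph of digirth at least $4$ is an oriented graph (it has no directed $2$-cycle) with no directed $3$-cycle whatsoever, so the hypothesis of Theorem~\ref{thm:main} holds with an arbitrary vertex in the role of $v_0$.

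To prove Theorem~\ref{thm:main} I would take a counterexample $D$ with $|V(D)|$ as small as possible, fix a plane embedding and a vertex $v_0$ meeting every directed $3$-cycle, and derive a contradiction. The preliminary reductions are routine. One may assume $D$ is $2$-connected: a cut vertex splits $D$ into smaller valid instances (a part avoiding $v_0$ has no directed $3$-cycle and is valid with any special vertex), whose $2$-colourings, recoloured to agree on the cut vertex, combine to one of $D$ because every directed cycle lies in a single part. Next, every vertex has in-degree at least $2$ and out-degree at least $2$: if some $u$ has in-degree $0$ it lies on no directed cycle and may be deleted and then coloured arbitrarily; if $u$ has a unique in-neighbour $a$, delete $u$, $2$-colour $D-u$ (still a valid instance), and give $u$ the colour other than that of $a$, which cannot complete a monochromatic directed cycle through $u$; symmetrically for out-degree. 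In particular $D$ has minimum degree at least $4$.

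The core idea is to attack triangles of the underlying graph $G$. If $G$ had girth at least $4$ it would be $3$-degenerate, because every subgraph of $G$ is again planar of girth at least $4$ and therefore has a vertex of degree at most $3$; processing $V(G)$ in a degeneracy order and placing each vertex into a colour class in which it has at most one already-coloured neighbour would make each colour class induce a forest, hence an acyclic subdigraph, so $D$ would be $2$-colourable --- a contradiction. Hence $G$ has a triangle, which is either a \emph{transitive} triangle $a\to b\to c$, $a\to c$, or a directed $3$-cycle; all directed $3$-cycles pass through $v_0$. For a transitive triangle I would contract one of the two arcs $a\to b$, $b\to c$ of its directed path --- never the chord $a\to c$, whose contraction immediately produces a digon --- obtaining a smaller plane digraph $D'$ by merging the two endpoints and identifying the resulting parallel arcs. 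The resulting colouring pulls back for free: if $D'$ is $2$-coloured and we colour both preimages of the merged vertex with its colour, any monochromatic directed cycle of $D$ would map to a monochromatic closed directed walk of $D'$, which contains a monochromatic directed cycle of $D'$ --- impossible.

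The real obstacle is to ensure that $D'$ is again a \emph{valid} instance --- still an oriented graph, and still with a single vertex meeting all its directed $3$-cycles. Contracting $a\to b$ creates a digon precisely when $D$ has a second transitive triangle $a\to z\to b$ through $a$ and $b$, or the directed triangle $a\to b\to v_0\to a$; and it creates new short directed cycles through the merged vertex, arising either from directed $3$-cycles of $D$ through $v_0$ (which remain hit by $v_0$) or from directed paths $a\to x\to y\to b$ and directed $4$-cycles $a\to b\to x\to y\to a$ in $D$ (which need not be hit by $v_0$, and are the dangerous case). So the crux is to choose, using planarity and the facial structure around the triangle, a transitive triangle and an arc of it whose contraction avoids every one of these configurations; failing that, one must enlarge the inductive statement so that a \emph{bounded} set is allowed to meet all short directed cycles instead of the single $v_0$, control the size of that set under the reductions, and give a parallel --- more delicate --- treatment of the triangles through $v_0$ and of the local configuration at $v_0$. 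Getting this bookkeeping between girth-raising contractions and the ``oriented / all-short-cycles-through-$v_0$'' constraints to close is, I expect, precisely the point where the novel technique of the paper enters.
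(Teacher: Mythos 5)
Your first paragraph is exactly how the corollary follows in the paper: with digirth at least $4$ there are no digons and no directed $3$-cycles at all, so the hypothesis of Theorem~\ref{thm:main} holds vacuously with any vertex as $v_0$. If you are allowed to cite Theorem~\ref{thm:main}, that one paragraph is a complete and correct proof of the corollary, and it is the same derivation the paper intends.

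The rest of the proposal, however, does not deliver what it sets out to do: your proof of Theorem~\ref{thm:main} has a genuine gap, and you name it yourself. The reductions (2-connectivity, in/out-degree at least $2$, the girth-$4$/$3$-degeneracy observation forcing a triangle, and the pull-back of a colouring under contracting an arc of a transitive triangle) are fine, but the entire difficulty is concentrated in the step you leave open: showing that some arc of some transitive triangle can be contracted without creating a digon (from a parallel path $a\to z\to b$ or the triangle $a\to b\to v_0\to a$) and without creating a directed $3$-cycle avoiding $v_0$ (from a directed $4$-cycle through the contracted arc). There is no evident local or planarity argument that such a safe contraction always exists, and the fallback of tracking a bounded hitting set for short cycles is not worked out and is not obviously closable; so as written this is a proof sketch whose core step is missing. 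The paper's actual route is quite different and avoids contractions entirely: it first triangulates (adding sink vertices inside faces), reduces along separating triangles using Lemma~\ref{lem:outer triangle colouring extend} together with Lemma~\ref{lem:combining a colouring for clique sum}, and in the $4$-connected case works in the dual graph, taking a Tutte path (Theorem~\ref{thm:Tutte path}) with respect to the facial cycle of the dual formed by the faces around $v_0$ (or around the precoloured triangle); the resulting dual cycle is used to $2$-colour the primal vertices by inside/outside, and the Tutte-path attachment conditions force any monochromatic directed cycle to be a facial triangle, which is excluded by the digirth hypothesis or by the choice of $v_0$. That global, non-inductive use of Tutte paths is precisely the ingredient your contraction scheme is missing.
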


The rest of the paper is devoted to the proof of Theorem \ref{thm:main}.

\section{Proof of Theorem \ref{thm:main}}

The main tool that we will use in our proof is the notion of a \emph{Tutte path}. This is a special kind of a path that was first used by Tutte \cite{Tu56,Tu77} in his proof that every 4-connected planar graph is hamiltonian. A version used in this paper is taken from \cite{Th}.

Tutte paths are defined using the following notion of connectivity. If $G$ is a graph and $H$ is a subgraph of $G$, then an \emph{$H$-component} $B$ of $G$ is either an edge $e\in E(G)\setminus E(H)$ with both ends in $H$ or it is a connected component $Q$ of $G-V(H)$ together with all edges from $Q$ to $H$ and all ends of these edges. The vertices of $V(B) \cap V(H)$ are called the \emph{vertices of attachment} of $B$. A bridge with $k$ vertices of attachment is said to be \emph{$k$-attached}.

Let $G$ be a graph with cycle $C$ and let $u,v$ be two vertices in $G$. A path $P$ in $G$ from $v$ to $u$ is called a \emph{Tutte path} with respect to $C$ if
\begin{itemize}
\item[(i)]
each $P$-component has at most three vertices of attachment and
\item[(ii)]
each $P$-component containing an edge of $C$ has at most two vertices of attachment.
\end{itemize}

\begin{theorem}[Thomassen \cite{Th}]
\label{thm:Tutte path}
Let $G$ be a 2-connected plane graph with facial cycle $C$. Let $v$ and $e$
be a vertex and edge, respectively, of $C$ and let $u$ by any vertex of $G$ distinct from $v$. Then $G$ has a Tutte path with respect to $C$ from $u$ to $v$ that contains the edge $e$.
\end{theorem}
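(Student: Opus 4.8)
The plan is to prove the theorem by induction on $|V(G)|$, using exactly the stated hypotheses: the formulation, in which one endpoint $v$ and the through-edge $e$ are pinned to the facial cycle $C$ while the other endpoint $u$ ranges over all of $V(G)$, is already shaped to make the induction close, so no further strengthening should be needed. Write $e=v_1v_2$ with $v_1,v_2$ consecutive on $C$. The base case is $G=C$: then $u\in V(C)$, and since $u\neq v$ the edge $e$ lies on exactly one of the two arcs of $C$ joining $u$ and $v$, so I take $P$ to be that arc. Its unique $P$-component is the complementary arc of $C$, attached only at $u$ and $v$, hence $2$-attached, and conditions (i) and (ii) hold trivially. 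For the inductive step I would dispose of the remaining small cases and then split into the two regimes below according to whether $G$ admits a separating pair.

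If $G$ has a $2$-cut $\{a,b\}$ (in particular, if $C$ has a chord), then I would write $G$ as the union of its $\{a,b\}$-bridges, glued only along $a$ and $b$, with the facial cycle $C$ correspondingly cut into arcs distributed among the bridges. The idea is to apply the induction hypothesis separately to the relevant pieces, after adjoining the virtual edge $ab$ where necessary to keep each piece $2$-connected and to keep $v$, $e$, and the cut vertices on the appropriate facial cycle, and then to concatenate the resulting sub-paths into a single $u$–$v$ path through $e$. The case analysis here is driven by the distribution of $u$, $v$, and the endpoints of $e$ among the bridges: whether they lie in a common bridge or are spread out dictates which piece receives which pair of terminals, which piece is genuinely traversed, and which pieces are left off as (necessarily $\le 2$-attached) $P$-components. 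Verifying that concatenation preserves (i) and (ii), i.e.\ that every $P$-component of the whole graph is a $P$-component of one of the pieces so that its attachment count does not grow, is the bookkeeping core of this case.

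The genuinely hard case is when $G$ is $3$-connected and $C$ is chordless, so that no separating pair is available to drop the order of $G$. Here my plan is to reduce $|V(G)|$ by a local modification anchored at $e$: for example, by contracting or deleting a carefully chosen edge incident to an endpoint of $e$ and interior to the face $f\neq C$ bounded by the cycle $C'$ through $e$, producing a smaller $2$-connected plane graph in which $v$, a substitute through-edge, and $u$ still occupy legal positions, and then lifting the resulting Tutte path back across the modification. I expect the main obstacle to be precisely this $3$-connected reduction together with the attachment accounting it forces: the surgery can merge several former $P$-components into one, or attach a component to a newly exposed vertex of $C$, and one must guarantee that no lifted $P$-component acquires more than three attachments and that none touching $C$ ever acquires a third, the rigidity of condition (ii) being the delicate point. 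By contrast, the remaining verifications, namely that each piece stays $2$-connected and plane and that $u\neq v$ survives every reduction, should be routine once the reduction scheme and the attachment bounds are settled.
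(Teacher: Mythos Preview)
The paper does not prove this theorem at all: it is quoted as a black-box result from Thomassen~\cite{Th} and used only as a tool in the proofs of Lemma~\ref{lem:outer triangle colouring extend} and Theorem~\ref{thm:main}. So there is nothing in the paper to compare your proposal against.

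As for the proposal itself, the overall architecture---induction on $|V(G)|$, splitting off the case of a $2$-cut (or chord of $C$) and handling the pieces separately, leaving the chordless $3$-connected situation as the hard case---is indeed the shape of Thomassen's argument. Your treatment of the base case and the $2$-cut case is plausible, though the ``bookkeeping core'' you allude to (tracking how $u$, $v$, $e$ distribute among the $\{a,b\}$-bridges and why attachment numbers do not grow under concatenation) already hides real case analysis. The genuine gap is in the $3$-connected step: ``contracting or deleting a carefully chosen edge incident to an endpoint of $e$'' is not how Thomassen's proof proceeds, and it is not clear that any such single-edge surgery will work. Thomassen instead walks along one arc of $C$ from $v$ to an end of $e$, studies the block decomposition of what remains after deleting the interior of that arc, and applies induction to a carefully chosen block with a new facial cycle; the attachment bounds then come from the block structure rather than from a local lift across a contraction. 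Your sketch does not yet contain the key idea that makes the $3$-connected reduction close, and the worry you yourself flag---that a lifted component might pick up a forbidden third attachment on $C$---is exactly the point at which a naive edge-contraction approach tends to fail.
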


\begin{lemma}
\label{lem:combining a colouring for clique sum}
Let $D$ and $D'$ be digraphs, whose intersection is a tournament $T$. Then any colouring of $D$ and any colouring of $D'$ that agree on $V(T)$ form a colouring of $D\cup D'$.
\end{lemma}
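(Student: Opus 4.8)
The plan is to argue by contradiction using a shortest monochromatic directed cycle, which I then ``shortcut'' with a single arc of the tournament $T$. Let $g$ denote the common extension of the given colouring $f$ of $D$ and the given colouring $f'$ of $D'$; this is well defined precisely because $f$ and $f'$ agree on $V(T)=V(D)\cap V(D')$, and $g$ restricts to $f$ on $V(D)$ and to $f'$ on $V(D')$. Suppose $g$ is not a colouring of $D\cup D'$, and among all $g$-monochromatic directed cycles of $D\cup D'$ choose one, $C$, with the fewest arcs; say $C$ has colour $c$.

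I would first record two structural facts, using that $D$, $D'$, and hence $D\cup D'$, are oriented graphs. First, any arc of $D\cup D'$ with both ends in $V(T)$ is actually an arc of $T$: the tournament $T$ already supplies an arc between those two vertices, and an oppositely directed arc would create a digon in $D$ or in $D'$. Consequently $D\cup D'$ has no digon, so every directed cycle in it has length at least $3$. Second, no arc of $D\cup D'$ joins $V(D)\setminus V(T)$ to $V(D')\setminus V(T)$, since every arc lies in $E(D)$ or in $E(D')$ and hence has both of its ends in $V(D)$ or both in $V(D')$. From the first fact, if $V(C)\subseteq V(D)$ then every arc of $C$ lies in $E(D)$ (an arc of $C$ not in $E(D)$ would lie in $E(D')$ and have both ends in $V(D)\cap V(D')=V(T)$), so $C$ would be a $g$-monochromatic directed cycle of $D$, contradicting that $f$ is a colouring; symmetrically $V(C)\subseteq V(D')$ contradicts that $f'$ is a colouring. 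So $C$ has a vertex $a\in V(D)\setminus V(T)$ and a vertex $b\in V(D')\setminus V(T)$, and by the second fact together with connectedness of $C$ (and of $C$ minus one vertex) one checks that $C$ meets $V(T)$ in at least two vertices.

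Now let $t$ and $t'$ be the first vertices of $V(T)$ encountered when following $C$ backward and forward from $a$; then $t\ne t'$, and the sub-path $P$ of $C$ from $t$ through $a$ to $t'$ has all internal vertices outside $V(T)$ and on the same side as $a$, i.e.\ in $V(D)\setminus V(T)$ (the internal vertices cannot switch sides, again by the second fact). Hence every arc of $P$ lies in $E(D)$, and $P$ has at least the one internal vertex $a$. Since $t,t'\in V(T)$ and $T$ is a tournament, $T$ contains an arc $\varepsilon$ joining $t$ and $t'$, and $\varepsilon\in E(D)\cap E(D')$. If $\varepsilon$ points from $t'$ to $t$, then $P$ followed by $\varepsilon$ is a directed cycle on at least the three vertices $t,a,t'$, all of whose arcs lie in $E(D)$; being $g$-monochromatic of colour $c$, this contradicts that $f$ is a colouring. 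If $\varepsilon$ points from $t$ to $t'$, replace the sub-path $P$ inside $C$ by the single arc $\varepsilon$; the resulting closed directed walk $C'$ still contains $t$, $t'$ and $b$, so it has at least three distinct vertices and is a genuine simple directed cycle; it lies in $D\cup D'$, is $g$-monochromatic of colour $c$, and has strictly fewer arcs than $C$ (we deleted at least the arc of $P$ incident with $a$), contradicting the minimality of $C$. In either case we have a contradiction, so $g$ is a colouring of $D\cup D'$.

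I expect the main obstacle to be the very last case: one must be certain that the rerouted object $C'$ is a \emph{simple} directed cycle of length at least three --- not a digon, and not a walk that revisits a vertex --- and that it is genuinely shorter than $C$. This is exactly why the ``no digon'' fact and the count of $V(T)$-vertices on $C$ are established in advance, and why we keep track of the internal vertex $a$ of $P$. A secondary point requiring a little care is the claim that $C$ must meet $V(T)$ in at least two vertices rather than just one.
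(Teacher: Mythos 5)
Your proof is correct: every step checks out, including the two delicate points you flag (that the rerouted $C'$ is a simple directed cycle on at least three vertices, and that $C$ meets $V(T)$ in at least two vertices). The key mechanism is the same as in the paper --- a monochromatic path between two vertices of $T$ lying inside one of $D,D'$ forces the tournament arc between its ends to point along the path, since otherwise it would close a monochromatic cycle inside that part --- but the bookkeeping differs. The paper applies this observation to \emph{all} maximal $D$- and $D'$-segments of a hypothetical monochromatic cycle simultaneously, concluding that the forced tournament arcs form a monochromatic directed cycle inside $T$ itself (hence inside $D$), whereas you apply it to a single segment of a \emph{shortest} monochromatic cycle and either get a cycle in $D$ directly or a strictly shorter counterexample. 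Your extremal version buys a slightly more self-contained finish (no need to argue that the collection of tournament arcs closes up into a cycle in $T$), at the cost of the simplicity/length verifications you carry out for $C'$; conversely, the paper's global argument is shorter once one accepts that step. One minor remark: your Fact 1 (every arc of $D\cup D'$ with both ends in $V(T)$ lies in $T$) uses that $D$ and $D'$ are digon-free; the lemma is stated for ``digraphs,'' but since the paper works throughout with oriented graphs and applies the lemma only to sub-digraphs of an orientation of a planar triangulation, this assumption is consistent with the intended setting and is not a gap.
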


\begin{proof}
The only detail to verify is that the combined colouring does not produce a monochromatic cycle. Since the colourings of $D$ and $D'$ have no monochromatic cycles, such a directed cycle $C$ would be composed of $2r$ ($r\ge1$) directed paths $P_1\cup P_2 \cup \cdots \cup P_{2r}$, where each $P_{i}$ is a directed path completely contained in either $D$ or $D'$ (joining vertices $v_i$ and $v_{i+1}$ in $T$ ($i=1,\dots,r$, indices taken modulo $r$). Since none of these paths together with the arcs of the tournament on $T$ forms a directed cycle, $T$ contains the arcs $v_iv_{i+1}$, which all together form a monochromatic directed cycle in $T$ and hence in both $D$ and $D'$. This contradiction completes the proof.
\end{proof}

\begin{lemma}
\label{lem:outer triangle colouring extend}
Let $G$ be a triangulation of the plane with the outer face bounded by a triangle $T=abc$. Then for any orientation $D$ of $G$ with digirth at least $4$ and any precolouring of\/ $T$ with colours $1$ and $2$, there exists a $2$-colouring of $D$ that extends the precolouring on $T$.
\end{lemma}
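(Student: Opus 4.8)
The plan is to induct on $|V(G)|$. If $|V(G)|=3$ then $G=T$, and since an orientation of a triangle with digirth at least $4$ is acyclic, the given precolouring is already a $2$-colouring; this is the base case. For the inductive step, first remove separating triangles: if a triangle $R=xyz$ of $G$ is not a face, delete the vertices drawn strictly inside $R$ to get a triangulation $G_1$ with outer face $T$, and keep the closed disk bounded by $R$ to get a triangulation $G_2$ with outer face $R$, both on fewer vertices. By the inductive hypothesis extend the precolouring of $T$ to a $2$-colouring of the orientation of $G_1$; this colours $x,y,z$; then extend the resulting colouring of $R$ to a $2$-colouring of the orientation of $G_2$. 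As $D$ is the union of these two orientations, which intersect in the tournament on $\{x,y,z\}$ and agree there, Lemma~\ref{lem:combining a colouring for clique sum} gives a $2$-colouring of $D$. Since every $3$-cut of a triangulation induces a separating triangle, after this step $G$ has no $3$-cut, hence is $4$-connected when $|V(G)|\ge5$; the remaining case $G=K_4$ is immediate, as such an orientation is acyclic.

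Now apply Theorem~\ref{thm:Tutte path} with $C=T$, with $v=c$, with $e$ the edge $ab$, and with $u$ a neighbour of $c$ outside $\{a,b\}$ (one exists since $c$ has degree at least $4$): this yields a Tutte path $P$ from $u$ to $c$ through $ab$. Then $P$ contains $a$, $b$, $c$ and $u$, so $|V(P)|\ge4$; consequently any $P$-component with a vertex off $P$ would have at most three attachments and hence would exhibit a cut of $G$ of size at most three, which is impossible. Thus every $P$-component is a single chord, so $V(P)=V(G)$ and $P$ is a Hamiltonian path of $G$; in particular $G$ is $P$ together with its chords. The orientation induced on $P$ is an acyclic (directed) path, so all the difficulty now lies in the chords.

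It remains to $2$-colour $V(G)=V(P)$ so as to extend the precolouring of $a,b,c$ with no monochromatic directed cycle. Here I would use the following fact, which is where the digirth hypothesis becomes essential: \emph{every maximal outerplanar digraph with digirth at least $4$ is acyclic}. Indeed, a shortest directed cycle of any digraph is induced (a chord would split it into a strictly shorter directed cycle), but a maximal outerplanar graph is chordal and so has no induced cycle of length at least $4$; hence a shortest directed cycle would have length at most $3$, contradicting the digirth. Since $u$ is adjacent to $c$, adding the edge $uc$ closes $P$ to a Hamiltonian cycle $\tilde P$ of $G$, and this exhibits $G$ as the union of the two maximal outerplanar (triangulated-polygon) digraphs $G_A$ and $G_B$ lying on the two sides of $\tilde P$, with $G_A\cap G_B=\tilde P$. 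By the fact above, each of $G_A$ and $G_B$ is acyclic. Therefore any monochromatic directed cycle must be \emph{mixed}, using chords of both $G_A$ and $G_B$ and passing between the two sides only at vertices of $\tilde P$; the task is reduced to choosing the $2$-colouring (subject to the precolouring of the triangle $abc$, whose edge $ab$ lies on $\tilde P$) so that no such mixed cycle survives.

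The main obstacle is precisely this last step. A single topological order need not serve all of $G$, so one must use that the chords of $G_A$ form a laminar family of intervals along $\tilde P$, and likewise those of $G_B$, so that the constraints coming from each side are nested. The plan is then a finer induction along the Tutte path $P$ that repeatedly peels a reducible configuration — for instance a vertex lying in a triangular face bounded by two edges of $\tilde P$ and a chord, which plays the role of a degree-$2$ vertex — while maintaining as an invariant that the current colouring of an initial segment of $P$ can be extended. Carrying out this peeling, and checking that the invariant is not destroyed when a chord lands on the opposite side of $\tilde P$, is the technical heart of the argument.
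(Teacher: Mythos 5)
Your reduction to the 4-connected case (splitting along separating triangles and combining via Lemma~\ref{lem:combining a colouring for clique sum}) matches the paper, and your structural observations are correct: in the 4-connected case a Tutte path in $G$ is Hamiltonian, closing it with the edge $uc$ splits $G$ into two maximal outerplanar digraphs glued along the Hamiltonian cycle $\tilde P$, and each such digraph of digirth at least $4$ is acyclic. But the argument stops exactly where the content of the lemma begins: you never produce the $2$-colouring. Knowing that every monochromatic directed cycle would have to be ``mixed'' does not tell you how to choose the colours so that no mixed cycle survives, nor how to honour the precolouring of $abc$ (in particular the case where $a,b,c$ all receive the same colour, which the paper has to treat separately). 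Your final paragraph --- peeling ``reducible configurations'' along $P$ while maintaining an unstated extension invariant --- is a plan, not a proof: no invariant is formulated, no peeling step is verified, and it is precisely when a chord lands on the opposite side of $\tilde P$ that such local inductions break down; this is the technical heart you acknowledge but do not supply. Note also that Hamiltonicity of 4-connected planar graphs is Tutte's classical theorem, so invoking Theorem~\ref{thm:Tutte path} in the primal graph gains essentially nothing beyond the standard reduction to the 4-connected case.

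The paper closes this gap with a different use of the same tool: it applies Theorem~\ref{thm:Tutte path} in the \emph{dual} graph $H$ of $G$, which is cubic and cyclically 4-edge-connected. The Tutte path there, completed by one dual edge, is a closed curve $C^*$ in the plane, and the colouring is simply ``colour 1 inside $C^*$, colour 2 outside'', with the path chosen so that $C^*$ crosses the edges $ab$ and $ac$ in the bicoloured case, and chosen in $H-t^*$ in the monochromatic case. The Tutte property then does all the work: a monochromatic directed cycle of $D$ lies on one side of $C^*$, so it encloses a bridge of the dual Tutte path with at most three attachments, which by 4-connectivity of $G$ is a single dual vertex; hence the cycle is a facial triangle of $G$, impossible since the digirth is at least $4$. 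That inside/outside colouring coming from a dual Tutte cycle is the missing idea; without it (or a fully worked-out substitute for your peeling step) the proposal does not prove the lemma.
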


The first case of the proof of this lemma is similar to that of Wu for induced forests \cite{Wu}.

\begin{proof}
The proof is by induction, with the base of induction corresponding to the case when the triangulation is 4-connected. We consider the dual graph $H$ of $G$. Note that $H$ is a cubic graph and that it is cyclically 4-edge-connected, i.e., any 3-edge-cut in $H$ isolates a single vertex from the rest of the graph. We distinguish two cases, 
depending on whether the precolouring on $T$ uses just one or both colours (see Figure~\ref{fig:4connected precolouring}).


\begin{figure}[htb]
   \centering
   \includegraphics[width=12.5cm]{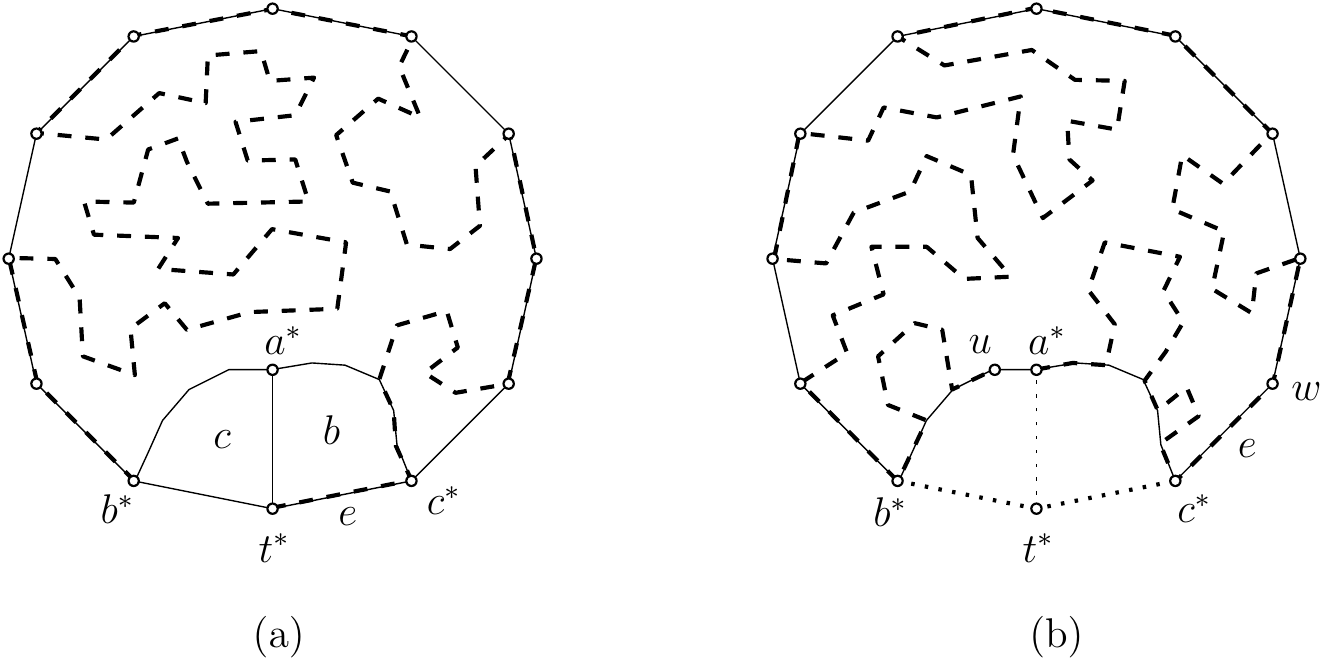}
   \caption{Finding Tutte path $P^*$ in the 4-connected case. (a) Bicoloured triangle $abc$. The dual graph $H$ is shown, where $a,b,c$ are faces. (b) Monochromatic triangle $abc$.}
   \label{fig:4connected precolouring}
\end{figure}

\paragraph{4-connected -- two colours case.} Suppose first that not all vertices of $T$ are precoloured the same. We may assume that $a$ is coloured 1 and that $b,c$ are coloured 2. Let us denote the vertices of $H$ corresponding to $T$ and to the faces surrounding $T$ by $t^*,a^*,b^*,c^*$, where the face $a^*$ of $G$ contains vertices $b,c$ but not $a$, etc. Let $P^*$ be a Tutte path in $H$ with respect to the facial cycle of $H$ containing $b^*,t^*,c^*$ (i.e., the face dual to the vertex $a$) that connects $t^*$ and $b^*$ and passes through the edge $e=t^*c^*$. This path, whose existence follows from Theorem \ref{thm:Tutte path}, together with the edge $b^*t^*$ forms a cycle $C^*$ in $H$. The cycle crosses the edges $ac$ and $ab$ of $T$. We may assume that the exterior of $C^*$ contains $b$ and $c$, while $a$ is in its interior. Now we colour the vertices of $D$ in the interior of the cycle by using colour 1, and those in the exterior by 2.

We claim that the above gives a 2-colouring of $D$. To see this, assume that the process gives rise to a monochromatic directed cycle $R$. Then we may assume that the whole cycle $R$ lies in the interior of $C^*$. Since this is also a cycle in $G$, its interior contains a vertex of $H$, but since $C^*$ is a Tutte path, any such cycle separates a $C^*$-component with at most 3 attachments. Since $G$ is 4-connected, this corresponds to a vertex in $H$ and the cycle $R$ is a triangle. But $D$ has digirth at least 4, so $R$ is not a directed cycle. This contradiction proves the claim.

\paragraph{4-connected -- one colour case.} Suppose now that all three vertices of $T$ have the same colour. In this case we proceed in the similar way except that we consider the graph $H$ obtained from the dual of $G$ by deleting the vertex $t^*$. The resulting graph is 2-connected and vertices $a^*,b^*,c^*$ all lie on the outer facial cycle $C^*$. Let $u$ be a neighbor of $a^*$ on $C^*$ and $w$ a neighbor of $c^*$ in $C^*$. Now we take a Tutte path with respect to $C^*$ from $a^*$ to $u$ that passes through the edge $e=c^*w$. Together with the edge $ua^*$ we obtain a cycle $R$, and we colour the vertices of $D$ inside this cycle differently from the colour used on $a,b,c$; and the vertices outside this cycle the same as $a,b,c$.

The above gives an extension of the colouring of $T$, as desired. We only need to argue that there are no monochromatic directed cycles of $D$. As before, the cycles of $G$ that are monochromatic are triangles that are dual to vertices in 3-attached $R$-components. All of these correspond to facial triangles in $G$ that are not directed cycles in $D$ by the assumption on the digirth. The only possible difference from the first case might be an $R$-component containing the vertex $b^*$ (when $b^*\notin V(R)$). By property (ii) of Tutte cycles, this $R$-component in $H$ is 2-attached. But in the dual graph of $G$, it is adjacent to $t^*$ and thus gives a monochromatic subgraph of $D$ that contains $a,b,c$ and the vertex $b'\ne b$ forming the second facial triangle with the edge $ac$ of $G$. However, since all edges incident with $a^*$ and $c^*$ are in $R$, this subgraph only consists of four vertices. The two triangles in this subgraph are not directed cycles in $D$, which implies that also the 4-cycle in this subgraph cannot be a directed cycle. This shows that $D$ has no monochromatic directed cycles.

\paragraph{Non-4-connected case.} If $G$ is not 4-connected, then there is a triangle $T'=xyz$ that separates the graph, one component being the subgraph $D_0$ in the exterior of $T'$ (including $T'$, which now becomes a facial triangle) and the other one, $D_1$, formed from $T'$ and the vertices and edges in the interior of $T'$. Now we first extend the precolouring of $T$ to $D_0$ (by using the induction hypothesis), and then apply the induction hypothesis to $D_1$ to extend the colouring of $T'$ obtained in the first step. By Lemma \ref{lem:combining a colouring for clique sum}, the combined colouring is a 2-colouring of $D$.
\end{proof}

\begin{proof}[Proof of Theorem \ref{thm:main}]
We may assume the underlying undirected graph $G$ is triangulated as otherwise, we can add a vertex inside each face of $D$ adjacent to all vertices of that face in $G$ and direct all edges towards the new vertices in $D$. This creates no new directed cycles.

The proof is now essentially the same as the proof of Lemma \ref{lem:outer triangle colouring extend} with one difference the we will explain below. If $G$ has a separating triangle, the induction step is the same as in Lemma \ref{lem:outer triangle colouring extend} (by applying either the lemma or this theorem inductively). So, it suffices to consider the 4-connected case, and we are not bound with any precolouring.

The faces of $G$ containing $v_0$ form a cycle $C^*$ in the dual graph $G^*$. Let us choose three consecutive vertices $u^*,v^*,w^*$ on $C^*$. A Tutte path with respect to $C^*$ joining $u^*$ and $v^*$ and containing the edge $v^*w^*$ forms a cycle $R$ together with the edge $u^*v^*$. By the connectivity of $G^*$, each $R$-component with two attachments is an edge. So all edges of $C^*$ are either an edge of $C^*$ or an entire $P$-component by property (ii) of Tutte paths.
In particular, no vertex of $C^*$ is in a 3-attached $R$-component and hence their duals, triangles containing $v_0$, all have a vertex inside $R$ and a vertex outside $R$. Thus, using colour 1 for all vertices inside $R$ and using colour 2 outside $R$ gives an acyclic colouring of $D$.
\end{proof}


\begin{thebibliography}{9}

\bibitem{BFJKM2004}
D. Bokal, G. Fijav\v{z}, M. Juvan, P. M. Kayll, B. Mohar,
The circular chromatic number of a digraph, J. Graph Theory 46 (2004),
227--240.

\bibitem{HM16} A. Harutyunyan, B. Mohar,
Planar digraphs of digirth five are 2-colorable,
J. Graph Theory, to appear.
arXiv: 1401.2213.

\bibitem{N1982}
V. Neumann-Lara, The dichromatic number of a digraph,
J.~Combin.\ Theory, Ser. B 33 (1982), 265--270.

\bibitem{N}
V. Neumann-Lara, Vertex colourings in digraphs. Some Problems. Seminar notes, University of Waterloo,
July 8, 1985 (communicated by A. Bondy and S. Thomass\'{e}).

\bibitem{Th} C. Thomassen,
A theorem on paths in planar graphs,
J. Graph Theory 7 (1983), 169--176.

\bibitem{Tu56} W. T. Tutte,
A theorem on planar graphs, Trans. Amer. Math. Soc. 82 (1956), 99--116.

\bibitem{Tu77} W. T. Tutte,
Bridges and Hamiltonian circuits in planar graphs,
Aequationes Math. 15 (1977), 1--33.

\bibitem{Wu} F. Wu,
Induced forests in planar graphs.
Honors thesis, UCSD, 2010.

\end{thebibliography}
\end{document}